\def\mf#1{{\mathfrak{#1}}} %mathfrak!
\def\mc#1{{   \mathcal{#1}   }}
\def\mb#1{{   \mathbb{#1}   }}
\def\st{{  \tr{St}  }}
\def\stg{{  \st  }}
\def\hzero#1{{  H^0( #1 )  }}
\def\canpd#1#2{{  \canp{#1}\big(  (1-p) #2  \big)  }}
\def\Endf#1{{  \mc{E}nd_F ( \struct {#1} )  }}
\def\reg{{  \tr{reg}  }}
\def\sing{{  \tr{sing}  }}
\def\endf#1{{  \tr{End}_F ( #1 )  }}
\def\pst{{  (p-1)^{st}  }}
\theoremstyle{plain} \numberwithin{equation}{subsection}
\newtheorem{theorem}{Theorem}[section]
\newtheorem{lemma}[theorem]{Lemma}
\newtheorem{proposition}[theorem]{Proposition}
\newtheorem{proposition/definition}[theorem]{Proposition/Definition}
\theoremstyle{definition}
\newtheorem{remark}[theorem]{Remark}
\newtheorem{definition}[theorem]{Definition}
\def\mf#1{{\mathfrak{#1}}} %mathfrak!
\def\mc#1{{   \mathcal{#1}   }}
\def\mb#1{{   \mathbb{#1}   }}
\def\ms#1{{   \mathscr{#1}   }}
\def\tb#1{{\textbf{#1}}}
\def\tr#1{{\textrm{#1}}}
\def\cohom#1#2#3{{H^{#1}  ( #2, \, #3 )}} %#1-th cohomology on #2 with coeffs in #3
\def\Cohom#1#2#3{{H^{#1} \big( #2, \: #3 \big)}} %big cohom
\def\COhom#1#2#3{{H^{#1} \Big( #2, \: #3 \Big)}} %Big cohom
\def\struct#1{{\mathcal{O}_{#1}}} %structure sheaf
\def\isom{{\ \tilde{\longrightarrow} \ }} %isomorphism
\def\algclosed#1{{   \bar{  \mb{F}  }_{#1}   }}
\def\can#1{{  \omega_{ #1 }^{-1}  }}
\def\cand#1#2{{  \can{#1}( #2 )  }}
\def\canp#1{{  \omega_{#1}^{1-p}  }}
\begin{document}

%\raisebox{1cm}

%Type here!
\title{On the $B$-canonical splittings of flag varieties}
\author{Chuck Hague}
\begin{abstract} Let $G$ be a semisimple algebraic group over an algebraically closed field of positive characteristic. In this note, we show that an irreducible closed subvariety of the flag variety of $G$ is compatibly split by the unique canonical Frobenius splitting if and only if it is a Richardson variety, i.e. an intersection of a Schubert and an opposite Schubert variety.
\end{abstract}
\maketitle
\bibliographystyle{amsplain}
\section{Introduction} Let $G$ be a semisimple simply-connected algebraic group over an algebraically closed field of positive characteristic and let $B \subseteq G$ be a Borel subgroup. By work of Mathieu \cite{Mat90} and Ramanathan \cite{Ra87}  (cf also \cite{BK}, \S2.3, \S4.1-4.2 and \cite{vdK93}), the flag variety $G/B$ admits a unique $B$-canonical splitting that compatibly splits all intersections of Schubert and opposite Schubert varieties (the so-called Richardson varieties). In this note, we show that the converse holds: Any irreducible closed subvariety of $G/B$ that is compatibly split by $\psi$ is a Richardson variety.

Here is an outline of the proof. We first show in Theorem \ref{th:comp split normal system iff} that if $\varphi$ is a splitting of a normal variety $X$ such that $\varphi$ is a $\pst$ power and such that the divisor of zeroes $Z( \varphi )$ generates a normal intersection compatible system $\mf X$ (cf Definitions \ref{def:bundles on a normal variety} and \ref{def:normal intersection compatible system} below), then a closed irreducible subvariety $Z \subseteq X$ is compatibly split by $\varphi$ if and only if $Z \in \mf X$. Furthermore, let $\psi$ denote the unique $B$-canonical splitting of $G/B$; then $\psi$ is a $\pst$ power and $Z(\psi)$ generates an intersection compatible system whose elements are precisely the Richardson varieties. It is straightforward to check (cf Theorem \ref{th:richardson facts}) that the set of Richardson varieties is a normal system. The result then follows immediately.

This note is based on a question asked by Allen Knutson. After I obtained this result, he informed me that he has independently obtained the same result. I would also like to thank Michel Brion, Shrawan Kumar, George McNinch, and Karl Schwede for helpful comments and discussions.

\section{Splitting facts}

\subsection{Review of Frobenius splitting methods}
\quad

In this section we review the theory of Frobenius splitting. The main references are \cite{BK} and the seminal paper \cite{MR85}. In the sequel we assume that all varieties are over an algebraically closed field $k$ of positive characteristic $p$.

Let $X$ be a scheme over $k$. We define a morphism $F_X$ of schemes over $\mb{F}_p$ as follows. Set $F_X(x) = x$ for all $x \in X$ and define $F_X^\# : \struct{X} \rightarrow (F_X)_* \, \struct{X}$ to be the $p^{\textrm{th}}$ power map $f \mapsto f^p$; this is clearly an $\mb{F}_p$-linear map. Note that $F_X$ is not a morphism of schemes over $\algclosed{p}$. This morphism is called the \textbf{absolute Frobenius morphism}. When the context is clear we'll drop the subscript and just write $F$.

%Note that for any sheaf $\mc{F}$ of $\struct{X}$-modules, the sheaf $F_* \mc{F}$ on $X$ is isomorphic to $\mc{F}$ as a sheaf of abelian groups, but the $\struct{X}$-module structure is twisted: for $m \in F_* \mc{F}$ and $f \in \struct{X}$ we have the twisted action $f*m = f^p \, m$.

\begin{definition} We say that $X$ is \textbf{Frobenius split} if there is an $\struct{X}$-linear map $\varphi : F_* \struct{X} \rightarrow \struct{X}$ such that $\varphi \circ F^\#$ is the identity map on $\struct{X}$.
\end{definition}

Let $ \Endf X $ denote the sheaf of $\struct X$-linear maps $ F_* \struct{X} \rightarrow \struct{X} $. Then $ \Endf X $ is a sheaf of $\struct X$-modules, where the $\struct X$-action is given by $(f.a)(h) = f \cdot (a(h))$ for $f \in \struct X$, $h \in F_* \struct X$, and $a \in \Endf X$. Set $ \endf X := \Cohom{0}{X}{ \Endf X } $.

Let $ \varphi \in \endf X $ be a splitting of $X$. We say that a closed subvariety $Y \subseteq X$ is \tb{compatibly split} by $\varphi$ if $ \varphi ( F_* \ms I ) \subseteq \ms I $, where $\ms I \subseteq \struct X$ is the ideal sheaf of $Y$. We have the following useful facts:
	\begin{enumerate}[(i)]
	\item $ \varphi \in \endf X $ is a splitting of $X$ if and only if $\varphi|_U$ it is a splitting of $U$, for any dense open subvariety $U \subseteq X$.
	\item If $Y \subseteq X$ is a closed subvariety that is compatibly split by a splitting $\varphi$ of $X$, then each irreducible component of $Y$ is also compatibly split.
	\item If $Y$ and $Z$ are closed subvarieties of $X$ that are compatibly split by a splitting $\varphi$ of $X$, then the set-theoretic intersection $Y \cap Z$ and union $Y \cup Z$ are also compatibly split.
	\end{enumerate}

\begin{definition} \label{def:bundles on a normal variety} Let $X$ be a normal variety. Let $X^\sing$ denote the singular locus of $X$ and let $X^\reg$ denote the regular locus. Recall that for all $n \in \mb Z$ we have $\omega_X^n = i_* \omega_{ X^\reg }^n$, where $i : X^\reg \hookrightarrow X$ is the inclusion. Let $s \in \Cohom{0}{X}{ \canp X }$; then we set $Z(s) := \overline{ Z( s|_{ X^\reg } ) }$, where by $Z( s|_{ X^\reg } )$ we mean the (set-theoretic) zero set of $s|_{ X^\reg }$. Also, we say that $s$ is a $\pst$ power if the restriction of $s$ to $X^\reg$ is a $\pst$ power.
\end{definition}

If $X$ is an $H$-scheme for an algebraic group $H$, then there is a natural action of $H$ on $ \endf X $ given by $$ ( h.\varphi )( f ) \, = \, h ( \varphi ( h^{-1} f ) ) \, ,$$ for all $h \in H$, $\varphi \in \endf X$, and $f \in F_* \struct X$.

\begin{theorem} \label{th:endf and canp} Let $X$ be a normal variety. Then there is a $\struct X$-module isomorphism $ \mc End_F ( \struct X ) \isom \canp X $. Furthermore, if $X$ is an $H$-variety for an algebraic group $H$, then the induced isomorphism $ \endf X \isom \Cohom{0}{X}{ \canp X } $ is compatible with the $H$-structure on $\endf X$ defined above and the natural $H$-structure on $ \Cohom{0}{X}{ \canp X } $.
\end{theorem}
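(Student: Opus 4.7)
The plan is to first construct the isomorphism over the smooth locus $X^\reg$ via Grothendieck duality for the Frobenius, then extend it to all of $X$ by normality, and finally deduce $H$-equivariance from naturality. By Definition \ref{def:bundles on a normal variety}, $\canp X = i_* \canp{X^\reg}$, where $i : X^\reg \hookrightarrow X$ is the inclusion. The analogous assertion for $\Endf X$ holds as well: this sheaf is reflexive (being a sheaf $\mc{H}om$ into $\struct X$), and reflexive coherent sheaves on a normal variety satisfy Serre's $S_2$ condition, so they are determined by their restriction to any open subset whose complement has codimension at least two. Since $X^\sing$ has codimension at least two in $X$, it thus suffices to produce a canonical $\struct X$-linear isomorphism over $X^\reg$.

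Over a smooth variety $Y$, the absolute Frobenius $F : Y \to Y$ is finite ($\struct Y$ is locally finite over $F^\# \struct Y$, since $Y$ is of finite type over a perfect field) and flat (by Kunz's theorem characterizing regularity). Grothendieck duality for the finite flat morphism $F$ then gives a canonical isomorphism
$$\Endf Y \; \cong \; F_* \, \omega_F \,,$$
where $\omega_F := F^! \struct Y$ is the relative dualizing sheaf. For a finite flat morphism between smooth schemes one has $\omega_F \cong \omega_Y \otimes (F^* \omega_Y)^{-1}$, and the computation $F^* L \cong L^{\otimes p}$ for every line bundle $L$ follows from the fact that the absolute Frobenius acts by multiplication by $p$ on $\tr{Pic}(Y)$ (visible directly from raising transition functions to the $p^{\tr{th}}$ power). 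Therefore $\omega_F \cong \canp Y$, which yields the isomorphism on $Y$.

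Equivariance is automatic from the naturality of Grothendieck duality: each $h \in H$ acts on $X$ by an automorphism that commutes with the absolute Frobenius and so induces matching transformations on both sides of the duality isomorphism. A direct check then verifies that the $H$-action $(h.\varphi)(f) = h(\varphi(h^{-1} f))$ on $\Endf X$ is intertwined with the natural $H$-action on $\canp X$ coming from the action on $\omega_X$. The main technical step is the duality computation above; the accompanying bookkeeping subtlety is that $F_*$ twists $\struct X$-structure by the $p^{\tr{th}}$-power map, so the stated isomorphism should be read as identifying $\Endf X$ with $\canp X$ equipped with the pushforward $\struct X$-module structure.
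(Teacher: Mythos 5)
The paper does not actually prove this theorem: it states it as a known result (the smooth case is \cite{MR85} and \cite{BK}, Theorem 1.3.8, and the normal case is a routine extension), so there is no proof in the paper to compare against. Your proof is a correct and essentially standard argument, and it is a clean way to establish the result. A few remarks. The reduction to $X^\reg$ is sound: both $\shom_{\struct X}(F_*\struct X,\struct X)$ and $\canp X$ are reflexive, so each is its own pushforward from $X^\reg$ (you should also note, for completeness, that $F_*$ and $\shom$ commute with restriction to opens, so $\Endf X|_{X^\reg}\cong\Endf{X^\reg}$). The identification $\omega_F\cong\omega_Y\otimes F^*\omega_Y^{-1}$ uses that $F$ is a finite flat l.c.i.\ morphism between smooth schemes, which you have established via Kunz. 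Your final remark about the Frobenius twist on module structures is not a minor footnote but actually essential to making the theorem statement literally true: as stated with the action $(f.a)(h)=f\cdot a(h)$, the sheaf $\Endf X$ is locally free of rank $p^{\dim X}$, so the isomorphism must be read as $\Endf X\cong F_*\canp X$ (equivalently, as an isomorphism of $F_*\struct X$-modules, which is the formulation in \cite{BK}). You were right to flag this. The equivariance claim is correct in spirit, though stated loosely; since the $H$-action $(h.\varphi)(f)=h(\varphi(h^{-1}f))$ is exactly conjugation by the automorphism $h$ of $X$, and the duality isomorphism is natural in $X$ and commutes with automorphisms (which commute with the absolute Frobenius), the two actions are intertwined. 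The classical treatments of this result (e.g.\ the Cartier-operator construction in \cite{MR85} and \cite{BK}) avoid invoking full Grothendieck duality, but your route gets to the same place and is arguably more conceptual.
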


Using this theorem, we will from now on consider Frobenius splittings to be elements of $\Cohom{0}{X}{ \canp X }$.

We also recall the following facts (cf \cite{BK}, Exercises 1.3.E (3) and (4)): Assume that $X$ is normal and let $D$ be a closed subvariety of pure codimension 1. Similarly to above, we set $$  \canpd X D \, := \, i_* \canpd{ X^\reg }{ (D \cap X^\reg) }  \, ,$$ where $i : X^\reg \hookrightarrow X$ is the inclusion. We have a similar definition for $ \cand X{-D} $. Let $ \varphi \in \Cohom{0}{X}{ \canp X } $ be a splitting of $X$. Then we have:
	\begin{enumerate}[(i)]
	\item $D$ is compatibly split by $\varphi$ if and only if $\varphi$ is in the image of the natural morphism $$ \Cohom{0}{X}{ \canpd X D } \hookrightarrow \Cohom{0}{X}{ \canp X } \, .$$ Further, if $\varphi$ compatibly splits $D$ then the degree of vanishing of $\varphi$ on $D$ is exactly $p-1$. 

	\item If $ \varphi $ is a $\pst$ power then $D$ is compatibly split if and only if $D \subseteq Z( \varphi )$.
	\end{enumerate}
(Although these results are stated in \cite{BK} only for the smooth case, it is easy to check that the results extend to the normal case). In particular, if $\varphi$ is a $\pst$ power and $D$ is a compatibly split divisor, then $\varphi$ is in the image of the $\pst$ power map $$ \Cohom 0 X { \cand X {-D} } \stackrel {\otimes (p-1)} \longrightarrow \Cohom{0}{X}{ \canpd X D } \hookrightarrow \Cohom{0}{X}{ \canp X } \, .$$

 We will need the following result from \cite{KM}.

\begin{proposition} \label{pr:km} (\cite{KM}, Proposition 2.1) Let $X$ be a smooth variety and let $\varphi \in \Cohom{0}{X}{ \canp X }$ be a splitting of $X$. If $ Y \subsetneq X $ is compatibly split by $\varphi$, then $Y \subseteq Z( \varphi )$.
\end{proposition}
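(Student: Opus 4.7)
The plan is to argue by contradiction via local analysis at a smooth point of $Y$ lying outside $Z(\varphi)$. Using that irreducible components of compatibly split subvarieties are themselves compatibly split, I may first assume $Y$ is irreducible. Suppose $Y \not\subseteq Z(\varphi)$; then $Y \setminus (Z(\varphi) \cup Y^\sing)$ is nonempty, so pick a point $y$ in it. Then $y$ is a smooth point of both $X$ (automatic) and $Y$ with $\varphi(y) \ne 0$. Choose local parameters $x_1, \ldots, x_n$ for $\loc X y$ such that $x_1, \ldots, x_r$ cut out $Y$ locally (with $r = \mathrm{codim}_X Y \ge 1$) and $x_{r+1}, \ldots, x_n$ restrict to local parameters of $Y$ at $y$.

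Under the isomorphism of Theorem \ref{th:endf and canp}, $\varphi$ corresponds locally to a section of the form $g \cdot (dx_1 \wedge \cdots \wedge dx_n)^{1-p}$ of $\canp X$ for some $g \in \loc X y$, with $g$ a unit at $y$ because $y \notin Z(\varphi)$. Locally $F_* \struct X$ is a free $\struct X$-module of rank $p^n$ with basis $\{x^\alpha : 0 \le \alpha_i \le p-1\}$; let $\mathrm{Tr}_0 : F_* \struct X \to \struct X$ denote the $\struct X$-linear map sending $x^{(p-1, \ldots, p-1)} \mapsto 1$ and the other basis elements to $0$. Up to conventions, the splitting is then given in local coordinates by $\varphi(f) = \mathrm{Tr}_0(g f)$.

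The final step is to deduce from compatible splitting of $Y$ that $g \in (x_1, \ldots, x_r)$, which contradicts $g$ being a unit. For each $1 \le i \le r$ and any $\psi \in \loc X y$, the element $f := x_i^{p-2} \prod_{j \ne i} x_j^{p-1} \cdot \psi$ satisfies $x_i f = x^{(p-1, \ldots, p-1)} \psi$, so the hypothesis $\varphi(x_i f) \in (x_1, \ldots, x_r)$ becomes $\mathrm{Tr}_0\bigl(g \psi \cdot x^{(p-1, \ldots, p-1)}\bigr) \in (x_1, \ldots, x_r)$; the left-hand side is the classical Cartier operator applied to $g \psi$. Varying $\psi$ over monomials $x^\lambda$ and using the freedom to adjust each of the last $n - r$ exponents modulo $p$, any monomial of $g$ whose first $r$ exponents vanish can be isolated and forced to be zero, giving $g \in (x_1, \ldots, x_r)$. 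The main obstacle is this final monomial-bookkeeping step --- carefully tracking the base-$p$ carries that determine which terms of $g \psi \cdot x^{(p-1, \ldots, p-1)}$ contribute to the $x^{(p-1, \ldots, p-1)}$-basis component --- but once organized the check is routine.
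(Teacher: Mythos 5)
The paper does not give its own proof of this proposition --- it is cited directly from \cite{KM} (Kumar--Mehta, Proposition 2.1), whose argument is precisely the local-coordinates/Cartier-operator computation you outline. Your proposal is correct and matches that standard approach. A few small notes. The detour through $f := x_i^{p-2}\prod_{j\ne i}x_j^{p-1}\psi$ and the quantifier ``for each $1\le i\le r$'' are unnecessary: since $r\ge 1$ and $p\ge 2$, the element $x^{(p-1,\ldots,p-1)}\psi$ already lies in $I_Y=(x_1,\ldots,x_r)$, so compatibility gives $\mathrm{Tr}_0(g\psi x^{(p-1,\ldots,p-1)})\in I_Y$ directly. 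The final ``bookkeeping'' step is genuinely routine once set up in the completion $k[[x_1,\ldots,x_n]]$: writing $g=\sum_\beta c_\beta x^\beta$, one has $C(gx^\lambda)=\sum_\gamma c_{p\gamma-\lambda}^{1/p}x^\gamma$, and the condition $C(gx^\lambda)\in(x_1,\ldots,x_r)$ forces $c_{p\gamma-\lambda}=0$ whenever $\gamma_1=\cdots=\gamma_r=0$. Given any $\beta$ with $\beta_1=\cdots=\beta_r=0$, choose $\gamma_i=\lceil\beta_i/p\rceil$ and $\lambda_i=p\gamma_i-\beta_i$ for $i>r$ (and $\gamma_i=\lambda_i=0$ for $i\le r$) to conclude $c_\beta=0$; thus $g\in(x_1,\ldots,x_r)\widehat{\mc O}_{X,y}$, and by faithful flatness of completion $g\in(x_1,\ldots,x_r)\mc O_{X,y}\subseteq\mf m_{X,y}$, contradicting that $g$ is a unit. (The phrase ``isolated'' is a slight misnomer --- each bad coefficient of $g$ appears as a coefficient of some $C(gx^\lambda)$ that is forced to vanish --- but the content is right.) The normalization $\varphi(f)=\mathrm{Tr}_0(gf)$ under $\Endf X\cong\canp X$ is indeed correct on the nose, not merely up to a unit, via Grothendieck duality for $F$.
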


\subsection{Three lemmas on splittings of normal varieties}
\quad

We will need the three following technical results.

\begin{lemma} \label{lem:nth power} Let $X$ be a normal irreducible variety and let $\mc L$ be a line bundle on $X$. Let $n \in \mb Z^+$ and let $s \in \cohom{0}{X}{ \mc L^n }$ be such that $s|_U$ is an $n^{th}$ power on some nonempty open set $U \subseteq X$. Then $s$ is an $n^{th}$ power.
\end{lemma}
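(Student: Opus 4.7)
The plan is to extend the local $n$-th root $t$ to a global section of $\mc L$. By hypothesis there exists $t \in \cohom{0}{U}{\mc L}$ with $t^n = s|_U$; since $X$ is reduced, it suffices to produce an extension $\tilde t \in \cohom{0}{X}{\mc L}$, for then $\tilde t^n = s$ on the dense open $U$ forces the identity globally.

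The extension uses normality in two steps. First, because $X$ is normal, sections of a line bundle satisfy the algebraic Hartogs property: if $V \subseteq X$ is open with $X \setminus V$ of codimension at least $2$, then $\cohom{0}{V}{\mc L} = \cohom{0}{X}{\mc L}$ (this reduces via local trivializations to the standard statement for $\struct X$). Hence it is enough to extend $t$ across every codimension-$1$ point of $X$.

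So let $D \subsetneq X$ be a codimension-$1$ irreducible subvariety with generic point $\eta$; since $X$ is normal, $\loc X \eta$ is a DVR, and I write $v$ for its valuation. Fix a generator $e$ of $\mc L_\eta$ as an $\loc X \eta$-module. The section $t$ has a value $t_\xi \in \mc L_\xi$ at the generic point $\xi$ of $X$, which lies in $U$ since $X$ is irreducible and $U$ is nonempty; write $t_\xi = f e$ with $f \in K(X)$. Then $t$ is regular at $\eta$ if and only if $f \in \loc X \eta$. Similarly $s_\xi = f^n e^n$, and since $s \in \cohom{0}{X}{\mc L^n}$ we have $f^n \in \loc X \eta$; the DVR relation $n\, v(f) = v(f^n) \geq 0$ forces $v(f) \geq 0$, so $f \in \loc X \eta$ and $t$ extends across $\eta$. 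I do not anticipate any serious obstacle: the argument is essentially the DVR trick combined with algebraic Hartogs, and the only point requiring mild care is phrasing the passage from ``section of $\mc L$ on $U$'' to ``element of $K(X)$ near $\eta$'' via the generic fiber of $\mc L$.
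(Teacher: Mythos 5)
Your proof is correct and follows the same route as the paper's: the paper's one-line claim that ``since $s$ has no poles, neither does $t$, hence by normality $t$ extends'' is precisely your DVR-at-codimension-one-points argument combined with algebraic Hartogs. You have simply unpacked the details that the paper leaves implicit.
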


\begin{proof} Let $t \in \cohom{0}{ U }{ \mc L|_U }$ be such that $t^n = s|_U$. Since $s$ has no poles on $X \setminus U$, neither does $t$. Hence, as $X$ is normal, $t$ extends to a global section $a \in \cohom{0}{ X }{ \mc L }$. Since $a^n|_U = s|_U$, we must have $a^n = s$ and hence $s$ is an $n^{th}$ power.
\end{proof}

\begin{lemma} \label{lem:p-1st powers on smooth varieties} Let $X$ be a normal variety. Let $\psi \in \Cohom{0}{X}{ \canp X }$ be a splitting and let $D \subseteq X$ be an compatibly split irreducible normal prime divisor. If $\psi$ is a $\pst$ power, then so is the induced splitting of $D$.
\end{lemma}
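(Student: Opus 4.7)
The approach is to pass to the regular loci, find a $(p-1)$st root of $\psi$ there, transport it to the regular locus of $D$ via adjunction, and then apply Lemma \ref{lem:nth power} to conclude that the resulting section is a $(p-1)$st root of the induced splitting $\psi_D$ on all of $D^\reg$.

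In more detail, I would first work on $X^\reg$, where $\omega_X^{-1}$ is a genuine line bundle. By Definition \ref{def:bundles on a normal variety} and the hypothesis that $\psi$ is a $\pst$ power, there exists $\tau \in \cohom{0}{X^\reg}{\omega_{X^\reg}^{-1}}$ with $\psi|_{X^\reg} = \tau^{p-1}$. Because $D$ is compatibly split, fact (i) recalled before Proposition \ref{pr:km} tells us that $\psi$ vanishes to order exactly $p-1$ along $D$, so $\tau^{p-1}$ vanishes to order $p-1$ along $D \cap X^\reg$. Since $D$ is a prime divisor, this forces $\tau$ itself to vanish to order $1$ there, so $\tau$ lies in $\cohom{0}{X^\reg}{\omega_{X^\reg}^{-1}(-D \cap X^\reg)}$.

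Next, I would restrict to the open subvariety $V := D^\reg \cap X^\reg$ of $D^\reg$, which is nonempty: since $X$ is normal its singular locus has codimension $\geq 2$ and so cannot contain the prime divisor $D$. Adjunction along $D \cap X^\reg$ inside $X^\reg$ provides a canonical identification of $\omega_{X^\reg}^{-1}(-D \cap X^\reg)|_V$ with $\omega_V^{-1}$, and this identification is compatible with taking $(p-1)$st powers. In particular, writing $\bar\tau$ for the image of $\tau|_V$ under this identification, we have $\bar\tau \in \cohom{0}{V}{\omega_V^{-1}}$ and $\psi_D|_V = \bar\tau^{p-1}$, the latter equality expressing the compatibility of the induced splitting $\psi_D$ with the adjunction-restriction $\cohom{0}{X}{\canpd X D} \to \cohom{0}{D}{\canp D}$.

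Finally, I would apply Lemma \ref{lem:nth power} to the smooth (hence normal) irreducible variety $D^\reg$, the line bundle $\omega_{D^\reg}^{-1}$, and the section $\psi_D|_{D^\reg}$: since this section is a $(p-1)$st power on the nonempty open set $V$, it is a $(p-1)$st power on all of $D^\reg$. By Definition \ref{def:bundles on a normal variety} this says precisely that $\psi_D$ is a $\pst$ power on $D$. The main obstacle is verifying that the induced splitting $\psi_D$ really is the adjunction-restriction of $\psi \in \cohom{0}{X}{\canpd X D}$ and that taking $(p-1)$st powers commutes with this restriction; granting that compatibility, the rest is a routine combination of Lemma \ref{lem:nth power} with the $(p-1)$-fold vanishing of $\psi$ along $D$.
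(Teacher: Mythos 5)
Your proof is correct and takes essentially the same route as the paper: restrict to the regular loci, find a $(p-1)$st root of $\psi$ living in $\cohom{0}{X^\reg}{\omega_{X^\reg}^{-1}(-D)}$, transport it to $D$ via the adjunction isomorphism $\omega_X^{-1}(-D)|_D \cong \omega_D^{-1}$ and its compatibility with $(p-1)$st powers (exactly the paper's commuting square), and finish with Lemma \ref{lem:nth power}. The only difference is cosmetic: the paper invokes Lemma \ref{lem:nth power} at the start to reduce to the smooth case, while you invoke it at the end, and the compatibility you flag as ``the main obstacle'' is precisely what the paper encodes in its commutative diagram.
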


\begin{proof} Since $X$ is normal, $D^\reg \cap X^\reg \neq \O$. By Lemma \ref{lem:nth power}, it suffices to check that the restriction of $\psi$ from $X^\reg$ to $D^\reg \cap X^\reg$ is a $\pst$ power, so we may assume that $X$ and $D$ are smooth. The result now follows from the commutativity of the following diagram: $$  \xymatrix{    \Cohom{0}{ X }{ \can X (-D) } \ar[d] \ar[r]^{ \hspace{-.2in} \otimes(p-1) } & \Cohom{0}{ X }{ \canp X ( (1-p)D ) } \ar[d] \\
\Cohom{0}{ D }{ \can D } \ar[r]^{ \otimes(p-1) } & \Cohom{0}{ D }{ \canp D }   }  $$  where the horizontal arrows are the natural maps induced by the isomorphism \newline $ \can X( -D )|_D \cong \can D $.
%\item Replacing $X$ by $X^\reg$ we can assume that $X$ is smooth; but the result is clear in that case.
\end{proof}

For any $s \in \Cohom{0}{ X }{ \canpd X D }$, we let (as above) $Z(s)$ denote the closure of $Z( s|_{X^\reg} )$; note that if $\canpd X D$ is a line bundle this agrees with the usual definition of the zero set of $s$. We have a similar definition for $Z(s)$ when $s \in \Cohom{0}{X}{ \cand X {-D} }$.

Recall the following result (\cite{flips}, Proposition 16.4): If $X$ is a normal variety and $D$ is a normal subvariety of $X$ of pure codimension 1 such that $ \cand X {-D} $ is a line bundle on $X$, then $ \cand X {-D}|_D \cong \can D $. In this case, we clearly also have $ \canpd X D|_D \cong \canp D $.

\begin{lemma} \label{lem:the case where can is a line bundle} Let $X$ be a normal, irreducible variety and let $D \subseteq X$ be a normal prime divisor such that $ \cand X {-D} $ is a line bundle on $X$. Let $$ p : \COhom{0}{X}{ \canpd X D } \to \Cohom{0}{D}{ \canp D } $$ be the map induced by the isomorphism $ \canpd X D|_D \cong \canp D $ and choose $s \in \COhom{0}{X}{ \canpd X D }$. Then $Z( p(s) ) = Z( s ) \cap D$.
\end{lemma}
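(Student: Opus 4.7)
The plan is to observe that the hypothesis $\cand X{-D}$ is a line bundle on $X$ promotes both $\canpd X D$ and $\canp D$ to genuine line bundles (on $X$ and $D$ respectively), after which the map $p$ is simply the ordinary restriction of a line-bundle section to a closed subvariety and the lemma reduces to a routine compatibility between the closure-from-the-regular-locus definition of $Z(\,\cdot\,)$ and the naive vanishing locus of such a section.

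First I would note that $\canpd X D = (\cand X{-D})^{\otimes(p-1)}$ is a line bundle on $X$, and that raising the isomorphism $\cand X{-D}|_D \cong \can D$ (quoted from \cite{flips}, Proposition 16.4, just above the statement) to the $(p-1)$st tensor power yields the isomorphism $\canpd X D|_D \cong \canp D$ through which $p$ is defined; in particular $\canp D$ is a line bundle on $D$ and $p(s)$ is literally the restriction $s|_D$ of a section of a line bundle on $X$ to the closed subvariety $D$.

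Next I would identify $Z(s)$ with the naive set-theoretic vanishing locus $V(s) \subseteq X$ of this line-bundle section. The degenerate cases $s = 0$, and $p(s)=0$ (which forces $D \subseteq V(s)$), are immediate since both $Z(p(s))$ and $Z(s) \cap D$ equal $D$. Otherwise $V(s)$ is pure of codimension one in the irreducible variety $X$; since $X$ is normal, $X^\sing$ has codimension at least two, so every irreducible component of $V(s)$ meets $X^\reg$ in a dense open subset, whence $V(s) = \overline{V(s) \cap X^\reg} = \overline{Z(s|_{X^\reg})} = Z(s)$. The identical argument, applied to $D$ — which is irreducible by primality and has $D^\sing$ of codimension at least two by normality — yields $V(p(s)) = Z(p(s))$.

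Finally, for sections of a line bundle on $X$ the vanishing locus of a restriction is set-theoretically the intersection of the vanishing locus with the closed subvariety, so $V(p(s)) = V(s|_D) = V(s) \cap D$; chaining this with the two identifications of the previous paragraph gives $Z(p(s)) = Z(s) \cap D$. I do not anticipate any real obstacle here: the entire content of the lemma is to unwind the reflexive-pushforward conventions of Definition \ref{def:bundles on a normal variety} in the presence of the line-bundle hypothesis, and the mild subtlety is simply the codimension-two bookkeeping that certifies no spurious components hide inside $X^\sing$ or $D^\sing$.
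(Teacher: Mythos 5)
Your proposal is correct and takes essentially the same route as the paper's proof: both observe that under the line-bundle hypothesis $p$ is just section restriction and that the closure-from-regular-locus definition of $Z(\cdot)$ agrees with the naive vanishing locus (the paper records this reconciliation in the sentence immediately preceding the lemma), reducing the claim to the compatibility of vanishing loci with restriction to a closed subvariety. The only cosmetic difference is that the paper trivializes $\cand X{-D}$ on a small affine open to reduce to the analogous statement for regular functions, whereas you argue globally with sections of a line bundle and make the codimension-two bookkeeping explicit.
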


\begin{proof} By passing to a small open set, we may assume that $X$ is affine and that $ \cand X {-D} $ trivializes on $X$. Then $\canpd X D$ trivializes on $X$ as well. Let $\theta \in \Cohom{0}{ X }{ \canpd X D }$ be a trivialization; then $ \theta|_D $ is a trivialization of $ \Cohom{0}{D}{ \canp D } $. Thus, using $\theta$, the map $p$ identifies with a surjection $ p' : \struct X \twoheadrightarrow \struct D $. Since $p$ is just section restriction, $p'$ is the natural restriction of functions; but then the result follows from the corresponding result for functions.
\end{proof}

\subsection{Systems of subvarieties}

\begin{definition} \label{def:normal intersection compatible system} Let $X$ be a variety. Following \cite{ST09}, we say that a set $\mf X$ of closed irreducible subvarieties of $X$ is \tb{intersection compatible} if the set of finite unions of elements of $\mf X$ is closed under set-theoretic intersection. If $A$ is a set of closed subvarieties of $X$, let $ \mf S(A) $ denote the minimal intersection-compatible system containing $A$. That is, $ \mf S(A) $ is the system obtained by iteratively taking set-theoretic intersections and irreducible components.

For any intersection compatible system $ \mf X $ and any $Y \in \mf X$, set $$ \mf X^Y := \{Y\} \cup \mf S \big( \{  E \in \mf X : E \tr{ is a divisor of } Y  \} \big)  \, ,$$ an intersection compatible subsystem of $\mf X$. Also set $ \overline{ \mf X^Y } := \{  Z \in \mf X : Z \subseteq Y  \} $, another intersection compatible subsystem of $ \mf X $. Clearly $ \mf X^Y \subseteq \overline{ \mf X^Y } $.

We say that an intersection compatible system $\mf X$ is \tb{normal} if: (1) All elements of $\mf X$ are normal; (2) For all $Y \in \mf X$ we have $ \mf X^Y = \overline{ \mf X^Y } $; and (3) for all $Y \in \mf X$ such that $Y^\sing \neq \O$ and for every irreducible component $Z$ of $Y^\sing$, there is a prime divisor $D $ of $ Y$ such that $Z \subseteq D$ and $D \in \mf X$. Note that (2) implies that for every $Y \in \mf X$, $ \mf X^Y $ is a normal intersection compatible subsystem of $\mf X$.

Given a Frobenius splitting $\psi$ of a variety $X$, let $D$ be the union of all prime divisors compatibly split by $\psi$ and set $\mf X_\psi := \{X\} \cup \mf S( D )$. In this case, the set of finite unions of elements of $ \mf X_\psi $ is closed even under scheme-theoretic intersection, not just set-theoretic intersection.
\end{definition}

\begin{remark} Although it is clear that every element of $ \mf X_\psi $ is a compatibly split subvariety, I don't know if the converse holds in general; although see Theorem \ref{th:comp split normal system iff} below for a partial converse.
\end{remark}

\begin{lemma} \label{lem:restriction of primary system} Let $X$ be a normal variety and let $\psi \in \Cohom{0}{X}{ \canp X }$ be a splitting of $X$ such that $\psi$ is a $\pst$ power and $ \mf X_\psi $ is a normal intersection compatible system. Let $D \subseteq X$ be a compatibly split prime divisor and let $\varphi$ be the induced splitting of $ D $. Then we have $  \mf X_\varphi \, = \, \mf X_\psi^D .$ In particular, $\mf X_\varphi$ is also a normal system.
\end{lemma}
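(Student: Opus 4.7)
The plan is to prove $\mf X_\varphi = \mf X_\psi^D$ by verifying both inclusions, leveraging throughout that $\varphi$ is itself a $\pst$ power. This follows from Lemma~\ref{lem:p-1st powers on smooth varieties} applied to $D$, which is normal by condition~(1) of the normal-system axioms. Consequently the characterization of compatibly split prime divisors by containment in the zero locus (fact~(ii) after the introduction of $\canpd X D$) applies to $\varphi$: a prime divisor $E$ of $D$ is compatibly split by $\varphi$ if and only if $E \subseteq Z(\varphi)$.

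For $\mf X_\psi^D \subseteq \mf X_\varphi$, both sides are closed under set-theoretic intersection and passage to irreducible components, so it suffices to show each divisor $F$ of $D$ lying in $\mf X_\psi$ belongs to $\mf X_\varphi$. Every element of $\mf X_\psi$ is compatibly split by $\psi$ (easy induction using closure under taking irreducible components and intersections), and the standard lift of compatibility through the surjection $F_*\struct X \twoheadrightarrow F_*\struct D$ yields that such an $F$ is compatibly split by $\varphi$; hence $F \in \mf X_\varphi$.

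For the harder inclusion $\mf X_\varphi \subseteq \mf X_\psi^D$, the same closure argument reduces to showing any prime divisor $E$ of $D$ compatibly split by $\varphi$ lies in $\mf X_\psi^D$. Working on the dense open $U := D^\reg \cap X^\reg$: since $D$ is Cartier on $X^\reg$, $\cand{X^\reg}{-D}$ is a line bundle there, and the discussion following fact~(ii) supplies $\sigma \in \Cohom{0}{X^\reg}{\cand{X^\reg}{-D}}$ with $\psi|_{X^\reg} = \sigma^{p-1}$. Lemma~\ref{lem:the case where can is a line bundle} then gives $Z(\varphi|_U) = Z(\sigma) \cap U$, while on $X^\reg$ the zero locus $Z(\sigma)$ (as a section of $\cand{X^\reg}{-D}$) is precisely the union of the compatibly split prime divisors of $X$ distinct from $D$. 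Therefore, when $E \cap U \neq \emptyset$, irreducibility of $E$ forces $E \subseteq D_i \cap D$ for some compatibly split prime divisor $D_i \neq D$ of $X$; since $D_i \cap D$ has pure codimension one in $D$ and $\mf X_\psi$ is intersection compatible, $E$ is an irreducible component of $D_i \cap D$, so $E \in \mf X_\psi$, and condition~(2) places $E$ in $\mf X_\psi^D$.

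The main obstacle is the residual case $E \cap U = \emptyset$. Normality of $D$ forces $D^\sing$ to have codimension at least two in $D$, so $E$ cannot be contained in $D^\sing$; hence $E \cap D^\reg \neq \emptyset$ and the assumption $E \cap U = \emptyset$ gives $E \cap D^\reg \subseteq X^\sing$. Density of $E \cap D^\reg$ in the irreducible $E$ then forces $E \subseteq X^\sing$, so $E$ is a top-dimensional component of $X^\sing$. Condition~(3) of the normal-system axioms applied to $X$ produces a prime divisor $F \in \mf X_\psi$ of $X$ containing $E$; when $F \neq D$, intersection-compatibility finishes as before, while the subcase $F = D$ requires careful further use of the normal-system axioms. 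This singular-locus bookkeeping is the delicate part of the argument.
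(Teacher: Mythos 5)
Your argument runs close to the paper's for most of the proof, but you explicitly leave a gap in the residual case $E \cap U = \emptyset$ (equivalently $E \subseteq X^\sing$) when the divisor $F$ produced by axiom~(3) happens to equal $D$ — and that gap is real, not merely bookkeeping. The missing idea, which is the crux of the paper's proof, is the following. Arguing by contradiction, suppose $E$ is an irreducible component of $Z(\varphi)$ that is \emph{not} an irreducible component of $D' \cap D$ for any compatibly split prime divisor $D' \neq D$. Intersection compatibility then forces $E \nsubseteq Z'(\psi)$ (the union of the components of $Z(\psi)$ other than $D$), so one may replace $X$ by the open set $X \setminus Z'(\psi)$, still containing a dense open piece of $E$. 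On this smaller $X$, write $\psi = \sigma^{p-1}$ with $\sigma \in \Cohom{0}{X}{\cand X{-D}}$; now $\sigma$ has \emph{no} zeros on $X^\reg$. The decisive step is that $\cand X{-D}$ and $\struct X$ are both reflexive sheaves on the normal variety $X$, so the trivialization $\sigma : \struct{X^\reg} \isom \cand{X^\reg}{-D}$ extends to an isomorphism on \emph{all} of $X$. This makes $\cand X{-D}$ a line bundle on $X$ itself, so Lemma~\ref{lem:the case where can is a line bundle} applies globally — including over points of $X^\sing$ — and yields $Z(\varphi) = Z'(\psi) \cap D = \emptyset$, a contradiction.

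By contrast, you only obtain the line bundle structure on $X^\reg$ and hence can only run Lemma~\ref{lem:the case where can is a line bundle} on $U = D^\reg \cap X^\reg$; this is precisely why the case $E \subseteq X^\sing$ escapes you. Moreover, trying to patch the hole with axiom~(3) alone cannot work: axiom~(3) guarantees some $F \in \mf X_\psi$ containing the relevant component of $X^\sing$, but gives no control forcing $F \neq D$, and there is no a priori reason a second such divisor exists. The reflexivity argument sidesteps the need for such a divisor entirely. In short: your approach is the right one up to the last case, but you need the contradiction set-up (restrict to $X \setminus Z'(\psi)$) together with the reflexive-extension trick to close it.
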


\begin{proof} Set $\mf Y := \{ Z \in \mf X_\psi : Z \tr{ is a divisor in } D \}$. Since $\varphi$ is a $\pst$ power, it suffices to check that $\mf Y$ is precisely the set of irreducible components of $ Z( \varphi ) $. Since $\psi$ compatibly splits $D$, it is in the image of the natural map $ \canpd X D \hookrightarrow \canp X $, and we will think of $\psi$ as a section of $\canpd X D$. Let $Z'( \psi )$ denote the zero set of $\psi$ considered as a section of $ \canpd X D $; then $ Z'( \psi ) $ is the union of all irreducible components of $ Z( \psi ) $ other than $D$.

We first check that each irreducible component of $ Z( \varphi ) $ is in $ \mf Y$. If $Z( \varphi ) = \O$ the result is trivial, so assume that $Z( \varphi ) \neq \O$. Let $E$ be an irreducible component of $Z( \varphi )$. Assume, to the contrary, that for every compatibly split prime divisor $D' \neq D$ of $X$ we have that $ E $ is not an irreducible component of $D' \cap D$. Since $\psi$ is a $\pst$ power, this implies that $E$ is not an irreducible component of $D' \cap D$ for any irreducible component $D'$ of $Z( \psi )$. In particular, $E \nsubseteq Z'(\psi)$. Replacing $X$ with $X \setminus Z'( \psi )$, we restrict to the case where $Z'( \psi ) = \O$.

Since $\psi$ is a $\pst$ power, we can write $\psi = \sigma^{p-1}$ for some $\sigma \in \Cohom 0 X {\cand X{-D}}$. Since $\psi$ has no zeroes on $X$, neither does $\sigma$. Hence $\sigma$ provides a trivialization $ \cand X {-D} \stackrel{\sim}{\to} \struct X $ on $X^\reg$. Since $ \cand X {-D} $ and $ \struct X $ are reflexive, this trivialization extends to an isomorphism on $X$; in particular, $\cand X{-D}$ is a line bundle on $X$. Thus, by Lemma \ref{lem:the case where can is a line bundle}, $Z( \varphi ) = Z'( \psi ) \cap D = \O$, which is false (note that $D$ is normal since $\mf X_\psi$ is a normal system). Hence $ E \in \mf Y $.

Conversely, to see that $ \mf Y \subseteq Z( \varphi ) $, let $E \in \mf Y$; then, as $E$ is compatibly split by $\psi$, it is also compatibly split by $\varphi$, and the result obtains since $E \cap D^\reg \neq \O$. 
\end{proof}

\begin{theorem} \label{th:comp split normal system iff} Let $X$ be a normal variety and let $\psi$ be a splitting of $X$ such that $\psi$ is a $\pst$ power and $ \mf X_\psi $ is a normal intersection compatible system. Then a closed irreducible subvariety $Z$ of $X$ is compatibly split by $\psi$ iff $Z \in \mf X_\psi$.
\end{theorem}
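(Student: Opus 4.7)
The ``if'' direction is immediate from the definition: every element of $\mf X_\psi$ is built from $X$ and the compatibly split prime divisors by set-theoretic intersection and taking irreducible components, and these operations preserve compatible splitting by facts (ii) and (iii) recalled at the start of Section 2. So the plan is to focus on the converse.

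For the converse, I would induct on $\dim X$. If $Z = X$ there is nothing to prove, so assume $Z \subsetneq X$; the core step is to locate a compatibly split prime divisor $D \in \mf X_\psi$ with $Z \subseteq D$. Once this is done, the reduction to $D$ is automatic from the technical lemmas of Section 2.2: Lemma \ref{lem:p-1st powers on smooth varieties} ensures that the induced splitting $\varphi$ of $D$ remains a $\pst$ power, and Lemma \ref{lem:restriction of primary system} gives $\mf X_\varphi = \mf X_\psi^D$ and shows that this subsystem is again normal. Since $Z \subseteq D$ is compatibly split by $\varphi$ and $\dim D < \dim X$, the inductive hypothesis applied to $(D,\varphi,Z)$ places $Z$ in $\mf X_\varphi \subseteq \mf X_\psi$.

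Producing $D$ is the substantive part, and I would handle it by splitting into two cases according to whether $Z$ meets $X^\reg$. In the generic case $Z \not\subseteq X^\sing$, the intersection $Z \cap X^\reg$ is a nonempty, proper, irreducible closed subvariety of the smooth variety $X^\reg$, compatibly split by $\psi|_{X^\reg}$, so Proposition \ref{pr:km} forces $Z \cap X^\reg \subseteq Z(\psi|_{X^\reg})$; taking closures yields $Z \subseteq Z(\psi)$. Because $\psi$ is a $\pst$ power, fact (ii) after Theorem \ref{th:endf and canp} identifies every irreducible component of $Z(\psi)$ as a compatibly split prime divisor of $X$, hence as an element of $\mf X_\psi$, and any component containing $Z$ serves as $D$.

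The remaining case $Z \subseteq X^\sing$ is the main obstacle, since Proposition \ref{pr:km} does not directly apply to the non-smooth ambient variety. This is exactly what axiom (3) in the definition of a normal intersection compatible system is designed to dispatch: applied with $Y = X$, it places every irreducible component of $X^\sing$ inside some prime divisor of $X$ belonging to $\mf X_\psi$, and $Z$ (being irreducible) lies in such a component, producing the required $D$. With both cases covered, the reduction step above closes the induction.
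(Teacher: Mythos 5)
Your proof is correct and follows essentially the same strategy as the paper's: induction on dimension, locating a compatibly split prime divisor $D \in \mf X_\psi$ containing $Z$ by splitting into the two cases $Z \not\subseteq X^\sing$ (via Proposition \ref{pr:km} on the smooth locus) and $Z \subseteq X^\sing$ (via axiom (3) of a normal system), and then descending to $D$ via Lemma \ref{lem:p-1st powers on smooth varieties} and Lemma \ref{lem:restriction of primary system}. If anything, you are slightly more explicit than the paper in spelling out that $Z$ is compatibly split by the induced splitting $\varphi$ of $D$, which is what the inductive hypothesis actually requires.
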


\begin{proof} The "if" part is clear. We prove the converse by induction on $\tr{dim} \, X$. If $ \tr{dim} \, X = 0 $ then the result is trivial; now assume that the result holds for all normal varieties of dimension $< n$ that satisfy the conditions of the theorem. Let $X$ be a normal variety of dimension $n$ and let $\psi$ be a splitting of $X$ such that $\psi$ is a $\pst$ power and $ \mf X_\psi $ is a normal system. Let $Z \subseteq X$ be compatibly split by $\psi$.

I first claim that there is $D \in \mf X_\psi$ such that $D$ is a divisor of $X$ and $ Z \subseteq D $. If $ Z \cap X^\reg \neq \O $ then $ Z \cap X^\reg \subseteq Z( \psi ) $ by Proposition \ref{pr:km}, and hence there is an irreducible component $D$ of $Z( \psi )$ such that $Z \subseteq D$. Since $\psi$ is a $\pst$ power, $D$ is compatibly split by $\psi$, so $D \in \mf X_\psi$, as desired. On the other hand, if $Z \subseteq X^\sing$, let $E$ be an irreducible component of $X^\sing$ containing $Z$. Since $\mf X_\psi$ is a normal system there is a prime divisor $D$ of $X$ such that $ Z \subseteq E \subseteq D $ and $D \in \mf X_\psi$. Hence the claim holds.

Let $\varphi$ denote the induced splitting of $D$; we now obtain an intersection compatible system $\mf X_\varphi$ on $D$. By Lemma \ref{lem:p-1st powers on smooth varieties}, $\varphi$ is a $\pst$ power, and by Lemma \ref{lem:restriction of primary system}, $\mf X_\varphi = \mf X_\psi^D$ is a normal intersection compatible system containing $Z$. The result now follows by the induction hypothesis.
\end{proof}

\section{Compatibly split subvarieties of flag varieties}

Let $G$ be a semisimple simply-connected algebraic group over $k$. Fix a maximal torus $T \subseteq G$ and a Borel subgroup $B$ of $G$ containing $T$. Recall that the \textbf{weights} of $G$ are the algebraic group homomorphisms $T \to k^*$. For any weight $\lambda$ of $G$ let $\mc L( \lambda )$ denote the equivariant bundle $G \times^B k_{ -\lambda }$ on $G/B$ with fiber $k_{-\lambda}$, and set $$  \hzero \lambda := \Cohom{0}{ G/B }{ \mc L( \lambda ) }  \, .$$ Let $\rho$ denote the half-sum of the positive roots of $G$ and set $\stg := \hzero{ (p-1) \rho }$, the \tb{Steinberg module}. 

\begin{definition} A Frobenius splitting $\psi$ of a $B$-variety $X$ is called \tb{$B$-canonical} if there is a $B$-equivariant morphism $\theta : \stg \otimes k_{ (p-1) \rho } \to \Cohom{0}{ X }{ \canp X }$ such that $\psi = \theta( f_- \otimes f_+ )$, where $f_- \in \stg$ is any nonzero lowest weight vector and $f_+ \in k_{ (p-1) \rho }$ is any nonzero vector.
\end{definition}

For any $w$ in the Weyl group $W$ of $G$ set $C_w := BwB / B$, the Schubert cell in $G/B$ corresponding to $w$ and let $X_w := \overline{ C_w }$ denote the Schubert variety corresponding to $w$. We similarly have the opposite Schubert cell $C^w := B^- w B / B$ and opposite Schubert variety $X^w := \overline{C^w}$. For any pair $v, w \in W$ set $ C_w^v := C_w \cap C^v $ and $ X_w^v := X_w \cap X^v $; these varieties are called \tb{Richardson varieties}.

The following is essentially due to Mathieu \cite{Mat90} (see the comments following Lemma 2.3); see also \S2.3 and \S4.1 in \cite{BK} and \S4.3 in \cite{vdK93}.

\begin{theorem} \label{th:B-canonical splitting facts} There is a unique $B$-canonical $\psi$ splitting of $G/B$. Moreover, $\psi$ is a $\pst$ power, and $Z(\psi)$ is the union of the codimension-1 Schubert and opposite Schubert varieties. In particular, $ \mf X_\psi $ is precisely the set of all Richardson varieties.
\end{theorem}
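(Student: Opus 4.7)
My plan is to build $\psi$ explicitly from the extremal weight sections of $\mc{L}(\rho)$, verify the $\pst$-power and divisor assertions directly, and then combine them with Theorem \ref{th:richardson facts} to identify $\mf X_\psi$ with the Richardson varieties.

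Since $G$ is simply connected, $\rho$ is a weight of $T$, so $\hzero{\rho}$ contains (up to scalar) a unique highest weight vector $p_+$ and a unique lowest weight vector $p_-$. A standard computation (see \cite{BK}, \S 2.3) identifies $\tr{div}(p_+) = \sum_\alpha X^{s_\alpha}$ and $\tr{div}(p_-) = \sum_\alpha X_{w_0 s_\alpha}$, summed over simple roots $\alpha$. Setting $\sigma := p_+ p_- \in \hzero{2\rho} = \Cohom{0}{G/B}{\can{G/B}}$ and $\psi := \sigma^{p-1} \in \Cohom{0}{G/B}{\canp{G/B}}$ produces a section that is by construction a $\pst$ power, whose zero set equals the union $\bigcup_\alpha X_{w_0 s_\alpha} \cup \bigcup_\alpha X^{s_\alpha}$ of codimension-$1$ Schubert and opposite Schubert varieties. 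That $\psi$ is actually a Frobenius splitting is the classical local calculation at $eB/B$, where the divisor of $\sigma$ has simple normal crossings and a suitable normalization of $p_\pm$ produces $\psi \circ F^\# = \mr{id}$.

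For $B$-canonicity, note that $p_+^{p-1}$ is a highest weight vector of $\stg = \hzero{(p-1)\rho}$, so the assignment $c \mapsto c \cdot p_+^{p-1}$ is a $B$-equivariant embedding $k_{(p-1)\rho} \hookrightarrow \stg$. Composing with the $G$-equivariant multiplication $\stg \otimes \stg \to \hzero{2(p-1)\rho}$ gives a $B$-equivariant map $\theta : \stg \otimes k_{(p-1)\rho} \to \Cohom{0}{G/B}{\canp{G/B}}$ sending $p_-^{p-1} \otimes 1$ to $\psi$. Uniqueness of the $B$-canonical splitting is due to Mathieu (see \cite{Mat90}, reviewed in \cite{BK}, \S 4.1 and \cite{vdK93}, \S 4.3), following from one-dimensionality of $\Hom_B(\stg \otimes k_{(p-1)\rho}, \hzero{2(p-1)\rho})$ together with the normalization $\psi \circ F^\# = \mr{id}$.

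It remains to identify $\mf X_\psi$ with the Richardson varieties. Because $\psi$ is a $\pst$ power, fact (ii) preceding Proposition \ref{pr:km} shows that a prime divisor of $G/B$ is compatibly split by $\psi$ iff it lies in $Z(\psi)$, so the divisor $D$ in the definition of $\mf X_\psi$ is exactly the union of the codimension-$1$ Schubert and opposite Schubert varieties, and $\mf X_\psi = \{G/B\} \cup \mf S(D)$. By Theorem \ref{th:richardson facts} the Richardson varieties form an intersection compatible system containing $G/B = X_{w_0}^e$ and all the above divisors, so $\mf X_\psi$ is contained in the set of Richardsons. For the reverse containment---the step I expect to be the main obstacle---it suffices, via $X_w^v = X_w \cap X^v$, to show that every Schubert $X_w$ lies in $\mf S(D)$ (symmetrically for $X^v$). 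I would induct on $\tr{codim}_{G/B} X_w$: the codimension-$1$ case is the definition of $D$, and for codimension $\geq 2$ the thinness of Bruhat order produces two distinct covers $u_1, u_2$ of $w$, so that $w$ is a maximal common lower bound of $u_1, u_2$ and hence $X_w$ is an irreducible component of $X_{u_1} \cap X_{u_2}$; by induction both $X_{u_i}$ lie in $\mf S(D)$, completing the argument.
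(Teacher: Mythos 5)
Your construction of $\psi$ matches the paper's exactly: both write $\psi = m(p_+ \otimes p_-)^{\otimes(p-1)}$ with $p_\pm$ extremal weight vectors of $\hzero{\rho}$, and both read off the $\pst$-power and $Z(\psi)$ claims from this; the paper simply outsources the divisor computation, the $B$-canonicity map $\theta$, and uniqueness to \cite{BK} and \cite{Mat90} where you spell them out, and all of those details check out. The one place your route genuinely diverges is the final step, and in fact what you call ``the main obstacle'' is not one: once you know the Richardson varieties form a \emph{normal} intersection compatible system $\mf X$ (Theorem \ref{th:richardson facts}), condition~(2) of Definition~\ref{def:normal intersection compatible system} applied to $Y = G/B$ gives $\mf X^{G/B} = \overline{\mf X^{G/B}} = \mf X$, i.e.\ $\{G/B\} \cup \mf S(D)$ already equals the full set of Richardsons where $D$ is the union of codimension-one Schubert and opposite Schubert varieties, which is exactly $Z(\psi)$. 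Your independent induction (using thinness of Bruhat order to produce two covers $u_1 \neq u_2$ of any $w$ with $l(w) \leq l(w_0) - 2$, so that $X_w$ is an irreducible component of $X_{u_1} \cap X_{u_2}$) is correct --- thinness does guarantee two covers, and two distinct covers of $w$ have $w$ as a maximal common lower bound --- but it amounts to re-deriving a special case of Theorem \ref{th:richardson facts}(2), which the paper has already established.
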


\begin{proof} The existence and uniqueness of $\psi$ is known (see the references above). We now check that $\psi$ is a $\pst$ power. Indeed, let $m : \hzero \rho \otimes \hzero \rho \to \hzero{2 \rho}$ be the multiplication map; then $\psi = m(v_+ \otimes v_-)^{ \otimes (p-1) }$, where $v_+$ (resp. $v_-$) is a nonzero highest (resp. lowest) weight vector in $\hzero \rho$. Finally, the facts about $Z(\psi)$ and $\mf X_\psi$ follow from the proof of Theorem 2.3.1 in \cite{BK}.
\end{proof}

Parts (i), (ii), and (iii) of the following theorem are just a restatement of Theorem 3.2 in \cite{Ri92} and Lemma 1 of \cite{BL03}.

\begin{theorem} \label{th:richardson facts} Let $v, w \in W$.
	\begin{enumerate}[(i)]
	\item $X_w^v$ is nonempty if and only if $v \leq w$, in which case $X_w^v$ is a normal irreducible variety of dimension $ l(w) - l(v) $. Furthermore, $X_{w'}^{v'} \subseteq X_w^v$ if and only if $ v \leq v' \leq w' \leq w $.
	\item $C_w^v$ is an open nonsingular subvariety of $X_w^v$.
	\item The boundary $ \partial X_w^v := X_w^v \setminus C_w^v $ of $X_w^v$ is a union of Richardson varieties.
	\end{enumerate} In particular, the set of Richardson varieties forms a normal intersection compatible system.
\end{theorem}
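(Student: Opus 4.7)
The plan is to verify the four conditions of Definition \ref{def:normal intersection compatible system} for the set $\mf X$ of Richardson varieties, using parts (i)--(iii) as input. Intersection compatibility follows from the identity
\[
X_w^v \cap X_{w'}^{v'} \: = \: (X_w \cap X_{w'}) \, \cap \, (X^v \cap X^{v'}),
\]
together with the observation that $X_w \cap X_{w'}$ is closed and $B$-stable, hence a union of Schubert varieties, while $X^v \cap X^{v'}$ is similarly a union of opposite Schubert varieties; distributing then yields a finite union of Richardsons, and this extends to arbitrary finite unions.

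Normality of each element is immediate from (i). For condition (3), part (ii) implies that the singular locus of $X_w^v$ is contained in the boundary $\partial X_w^v$, and part (iii) exhibits the top-dimensional components of that boundary as Richardson divisors of $X_w^v$; hence every irreducible component of $(X_w^v)^\sing$ is contained in such a divisor.

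The main work is condition (2): for each $Y = X_w^v$, every Richardson $Z = X_{w'}^{v'} \subseteq Y$ must lie in the subsystem $\mf X^Y$ generated by $\{Y\}$ and the Richardson divisors of $Y$. The key identity $X_{w'}^{v'} = X_{w'}^v \cap X_w^{v'}$ (which holds when $v \leq v' \leq w' \leq w$, since then $X_{w'} \cap X_w = X_{w'}$ and $X^v \cap X^{v'} = X^{v'}$) reduces matters to the single-index cases $X_{w'}^v$ and $X_w^{v'}$. By symmetry I treat $X_{w'}^v$, arguing by strong induction on $k = l(w) - l(w')$: the cases $k = 0, 1$ are trivial, and for $k = 2$ the diamond property of the Bruhat order produces two distinct coatoms $u_1, u_2 \lessdot w$ covering $w'$, so a length count shows $X_{w'}^v$ is a top-dimensional irreducible component of $X_{u_1}^v \cap X_{u_2}^v$ and is hence extracted by the component-taking operation. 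For $k \geq 3$, I pick a coatom $u \lessdot w$ with $w' \leq u$ and apply the induction hypothesis to $X_{w'}^v \subseteq X_u^v$; to transfer membership back to $\mf X^Y$, I check that every Richardson divisor of $X_u^v$ already lies in $\mf X^Y$, via the clean intersection $X_u^v \cap X_w^{v_1} = X_u^{v_1}$ on one side and the $k = 2$ case applied directly inside $Y$ on the other. The main obstacle is the $k = 2$ step: an intersection of two Richardson divisors can in principle produce several top-dimensional irreducible components, so the Bruhat diamond lemma and a careful length count are needed to ensure $X_{w'}^v$ is always among them. Once this is in hand, the induction runs cleanly and all four conditions of the definition are verified.
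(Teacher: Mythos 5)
Your proposal is correct, and it differs from the paper's argument in a genuine (if ultimately closely related) way. The paper verifies condition (2) of Definition~\ref{def:normal intersection compatible system} by induction on dimension, reducing everything to the statement $(*)$: for each Richardson divisor $D$ of $Y = X_w^v$ and each Richardson divisor $E$ of $D$, there is a second Richardson divisor $D'$ of $Y$ with $E$ an irreducible component of $D \cap D'$. It then verifies $(*)$ by hand in the two cases $E = X_{b'}^v$ (citing Lemma~10.3 of \cite{BGG75}) and $E = X_b^{v'}$ (direct computation), invoking $w_0$-symmetry for the remaining cases. Your route instead first splits any Richardson $X_{w'}^{v'} \subseteq Y$ as $X_{w'}^v \cap X_w^{v'}$, then runs a strong induction on $k = l(w) - l(w')$ for the one-sided pieces, with the diamond property of the Bruhat order powering the $k=2$ base case and a coatom descent handling $k \geq 3$. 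The underlying combinatorics is the same — the Bruhat lifting/diamond lemma is exactly what \cite{BGG75} Lemma 10.3 provides, and your ``clean intersection'' $X_u^v \cap X_w^{v'} = X_u^{v'}$ is the paper's second case — but your reduction to single-index Richardson varieties makes the induction run on a scalar length difference rather than on dimension with the auxiliary condition $(*)$. That is a slightly leaner organization; the paper's formulation of $(*)$ has the virtue of fitting more directly into the abstract normal-system framework of \S2.3, making it easier to see exactly which structural hypothesis is being used. One minor point: in your $k \geq 3$ step you should note explicitly that once all Richardson divisors of $X_u^v$ are shown to lie in $\mf X^Y$, the containment $\mf X^{X_u^v} \subseteq \mf X^Y$ follows because $\mf X^Y$ is closed under intersections and irreducible components (and contains $X_u^v$ itself as a divisor of $Y$); this is implicit in your proposal but worth stating. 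Also, $v_1$ in your clean-intersection formula should read $v'$.
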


\begin{proof} We need to verify the "in particular" part. By (i) each Richardson variety is normal, so part (1) of the definition of a normal system is satisfied. By (ii) and (iii), the singular locus of a Richardson variety is contained in a union of Richardson varieties, and by (i) every Richardson subvariety of a Richardson variety $X^v_w$ is contained in a Richardson divisor of $X^v_w$. Thus part (3) of the definition of a normal system follows. We now need to check part (2) of the definition of a normal system.

We show, by induction on dimension, the following: Let $Y$ be a Richardson variety and let $Z$ be the union of all Richardson varieties that are divisors in $Y$. Then $ \mf S( Z ) $ is the set of all Richardson varieties contained in $Z$. This is trivial for 0-dimensional Richardson varieties, so now choose $v, w \in W$ with $v < w$ and assume that the induction hypothesis holds for all Richardson varieties of dimension $< l(w) - l(v)$. We need to check the following: \begin{itemize} \item[$(*)$] For all Richardson divisors $D$ in $ X_w^v $, and for any Richardson divisor $E$ of $D$, there is a Richardson divisor $D'$ of $X_w^v$ such that $E$ is an irreducible component of $D' \cap D$.
	\end{itemize}
Now, the Richardson divisors of $X_w^v$ are the $X_w^a$ for all $a > v$ with $l(a) = l(v) + 1$ and the $X_b^v$ for all $b < w$ with $l(b) = l(w) - 1$. We'll verify that $(*)$ holds in the case $D = X^v_b$; a similar argument (or translation by $w_0$) will then give $(*)$ for the case $D = X^a_w$.

Fix $b < w$ with $l(b) = l(w) - 1$. We first consider the divisor $ X_{b'}^v $ of $X_b^v$ for $b' < b$, $l(b') = l(b) - 1 = l(w) - 2$. By Lemma 10.3 in \cite{BGG75}, there exists $x \in W$ with $x \neq b$ and $b' < x < w$. Hence $X_{b'}$ is an irreducible component of $ X_b \cap X_x $, so that $X^v_{b'}$ is an irreducible component of $ X^v_x \cap X^v_b $. Since $ X^v_x $ is a divisor in $X^v_w$, the result now follows for $ X_{b'}^v $.

Now consider the divisor $X_b^{v'}$ of $X_b^v$ for $v' > v$, $l(v') = l(v) + 1$. Then $$ X^{v'}_w \cap X^v_b \, = \, X_w \cap X_b \cap X^{v'} \cap X^v \, = \, X_b^{v'} \, .$$ Since $ X^{v'}_w $ is a divisor in $X_w^v$, the result now follows, and the proof is complete.
\end{proof}

We now have the main result.

\begin{theorem} Let $\psi \in \Cohom{0}{ G/B }{ \canp{G/B} }$ be the unique $B$-canonical splitting of $G/B$. Then a closed subvariety of $G/B$ is compatibly split by $\psi$ if and only if it is a union of Richardson varieties.
\end{theorem}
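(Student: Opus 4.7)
The plan is to obtain this as an essentially immediate consequence of the machinery already assembled. By Theorem \ref{th:B-canonical splitting facts} the splitting $\psi$ is a $\pst$ power, and the system $\mf X_\psi$ coincides with the set of all Richardson varieties in $G/B$. By Theorem \ref{th:richardson facts} that set is a normal intersection compatible system. Since $G/B$ is smooth (hence normal), the hypotheses of Theorem \ref{th:comp split normal system iff} are satisfied. Applying that theorem, a closed irreducible subvariety $Z \subseteq G/B$ is compatibly split by $\psi$ if and only if $Z \in \mf X_\psi$, i.e. if and only if $Z$ is a Richardson variety.

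To deduce the statement for arbitrary (not necessarily irreducible) closed subvarieties, I would use fact (ii) from the splitting review: if a closed subvariety $Y \subseteq G/B$ is compatibly split by $\psi$, then each irreducible component $Y_i$ of $Y$ is also compatibly split. By the irreducible case just established, each $Y_i$ must be a Richardson variety, and hence $Y = \bigcup_i Y_i$ is a union of Richardson varieties. For the converse direction, each Richardson variety is an element of $\mf X_\psi$ and therefore compatibly split; the fact that a finite union of compatibly split subvarieties is itself compatibly split (fact (iii)) completes the argument.

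There is essentially no new obstacle at this stage, since all the genuine work has already been absorbed into the three main inputs: the general criterion of Theorem \ref{th:comp split normal system iff}, the $\pst$-power structure of the $B$-canonical splitting from Theorem \ref{th:B-canonical splitting facts}, and the normality of the Richardson system from Theorem \ref{th:richardson facts}. The one subtlety worth flagging explicitly is that Theorem \ref{th:comp split normal system iff} is stated for irreducible subvarieties only, so the passage to reducible compatibly split subvarieties must be made by reducing to components via fact (ii); this is the only bookkeeping step and it is entirely formal.
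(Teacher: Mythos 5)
Your proposal is correct and follows the same route as the paper, which simply cites Theorems \ref{th:comp split normal system iff}, \ref{th:B-canonical splitting facts}, and \ref{th:richardson facts} and declares the result immediate. You have merely made explicit the routine passage from irreducible to arbitrary closed subvarieties via facts (ii) and (iii), which the paper leaves implicit.
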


\begin{proof} This is immediate from Theorems \ref{th:comp split normal system iff}, \ref{th:B-canonical splitting facts}, and \ref{th:richardson facts}.
\end{proof}

\newpage

\bibliography{thebibliography}
\end{document}